\newcommand{\bbR}{\mathbb{R}}
\newcommand{\rmd}{\mathrm{d}}
\newcommand{\rmI}{\mathrm{I}}
\newcommand{\rmN}{\mathrm{N}}
\newcommand{\Tr}{\mathrm{Tr}}
\newcommand{\MSE}{\mathrm{MSE}}
\newcommand{\abs}[1]{|{#1}|}
\newcommand{\set}[1]{\{{#1}\}}
\newcommand{\norm}[1]{\|{#1}\|}
\newcommand{\ip}[2]{\langle{#1},{#2}\rangle}
\newtheorem{theorem}{Theorem}
\theoremstyle{definition}
\newtheorem{problem}[theorem]{Problem}
\newtheorem{definition}[theorem]{Definition}
\title{Frame completions for optimally robust reconstruction} 
\author{Matthew Fickus\supit{a}, Dustin G.~Mixon\supit{b} and Miriam J.~Poteet\supit{a}
\skiplinehalf
\supit{a}Department of Mathematics and Statistics, Air Force Institute of Technology\\Wright-Patterson Air Force Base, Ohio 45433, USA\\
\supit{b}Program in Applied and Computational Mathematics, Princeton University\\Princeton, New Jersey 08544, USA
}
\begin{document} 
\maketitle

%%%%%%%%%%%%%%%%%%%%%%%%%%%%%%%%%%%%%%%%%%%%%%%%%%%%%%%%%%%%%%%%
\begin{abstract}
In information fusion, one is often confronted with the following problem: given a preexisting set of measurements about an unknown quantity, what new measurements should one collect in order to accomplish a given fusion task with optimal accuracy and efficiency.  We illustrate just how difficult this problem can become by considering one of its more simple forms: when the unknown quantity is a vector in a Hilbert space, the task itself is vector reconstruction, and the measurements are linear functionals, that is, inner products of the unknown vector with given measurement vectors.  Such reconstruction problems are the subject of frame theory.  Here, we can measure the quality of a given frame by the average reconstruction error induced by noisy measurements; the mean square error is known to be the trace of the inverse of the frame operator.  We discuss preliminary results which help indicate how to add new vectors to a given frame in order to reduce this mean square error as much as possible.
\end{abstract}

\keywords{information fusion, frame completion}

%%%%%%%%%%%%%%%%%%%%%%%%%%%%%%%%%%%%%%%%%%%%%%%%%%%%%%%%%%%%%%%%
% Section 1: Introduction
%%%%%%%%%%%%%%%%%%%%%%%%%%%%%%%%%%%%%%%%%%%%%%%%%%%%%%%%%%%%%%%%

\section{Introduction}

The \textit{synthesis operator} of a sequence of vectors $F=\set{f_n}_{n=1}^N$ in $\bbR^M$ is $F:\bbR^N\rightarrow\bbR^M$, $\smash{Fy:=\sum_{n=1}^N y(n)f_n}$.  That is, $F$ is the $M\times N$ matrix whose columns are the $f_n$'s.  Here and throughout, we make no notational distinction between the vectors themselves and the synthesis operator they induce.  The vectors $F$ are said to be a \textit{frame} for $\bbR^M$ if there exists \textit{frame bounds} $0<A\leq B<\infty$ such that $A\norm{x}^2\leq\norm{F^*x}^2\leq B\norm{x}^2$ for all $x\in\bbR^M$.  In this finite-dimensional setting, the optimal frame bounds $A$ and $B$ of an arbitrary $F$ are the least and greatest eigenvalues of the \textit{frame operator}:
\begin{equation*}
FF^*=\sum_{n=1}^{N}f_n^{}f_n^*,
\end{equation*}
respectively.  Here, $f_n^*$ is the linear functional $f_n^*:\bbR^M\rightarrow\bbR$, $f_n^*f:=\ip{f}{f_n}$.  In particular, we have that $F$ is a frame if and only if the $f_n$'s span $\bbR^M$, which necessitates $M\leq N$.

Frames provide numerically stable methods for finding overcomplete decompositions of vectors, and as such are useful tools in various signal processing applications~\cite{KovacevicC:07a,KovacevicC:07b}.  Indeed, if $F$ is a frame, then any $x\in\bbR^M$ can be decomposed as:
\begin{equation*}
x=F\tilde{F}^*x=\sum_{n=1}^{N}\ip{x}{\tilde{f}_n}f_n,
\end{equation*}
where $\tilde{F}=\set{\tilde{f}_n}_{n=1}^{N}$ is a \textit{dual frame} of $F$, meaning it satisfies $F\tilde{F}^*=\rmI$.  The most often-used dual frame is the \textit{canonical} dual, namely the pseudoinverse $\tilde{F}=(FF^*)^{-1}F$.  Note that computing a canonical dual involves the inversion of the frame operator.  As such, when designing a frame for a given application, it is important to retain control over the spectrum $\set{\lambda_m}_{m=1}^{M}$ of $FF^*$.  Here and throughout, such spectra are arranged in nonincreasing order, with the optimal frame bounds $A$ and $B$ being $\lambda_M$ and $\lambda_1$, respectively.

The \textit{mean square error} is one way to measure the quality of a given dual frame $\tilde{F}$.  In particular, consider the problem of reconstructing $x$ from $F^*x+\epsilon$, where $\epsilon$ is an additive error term.  Applying any given dual $\tilde{F}$ to $F^*x+\epsilon$ gives $\tilde{F}(F^*x+\epsilon)=\tilde{F}F^*x+\tilde{F}\epsilon=x+\tilde{F}\epsilon$.  The magnitude of the error between the original vector $x$ and its reconstructed approximation $x+\tilde{F}\epsilon$ is $\norm{\tilde{F}\epsilon}$.  Treating $\epsilon$ as a random variable, the corresponding \textit{mean square error} (MSE) is the expectation of $\norm{\tilde{F}\epsilon}^2$ with respect to $\epsilon$'s probability density function $p$:
\begin{equation*}
\MSE[\tilde{F}]
:=\int_{\bbR^N}\norm{\tilde{F}\epsilon}^2 p(\epsilon)\,\rmd\epsilon.
\end{equation*}
In the special case where $\tilde{F}$ is the canonical dual $(FF^*)^{-1}F$ and the entries of $\epsilon$ are independently distributed with each having mean zero and variance $\sigma^2$, the MSE can be simplified~\cite{GoyalVT:98} in terms of the trace of the inverse of the frame operator:
\begin{equation}
\label{equation.definition of MSE}
\MSE[(FF^*)^{-1}F]
=\sigma^2\Tr[(FF^*)^{-1}]
=\sigma^2\sum_{m=1}^M \frac{1}{\lambda_m}.
\end{equation}
In order to construct reconstruction systems that are robust to noise, we try to design frames for which the MSE \eqref{equation.definition of MSE} is as small as possible.  One (nonrealistic) way to do this is to simply scale one's frame vectors $F$.  Indeed for any $c>0$, the frame operator $c^2FF^*$ of $cF$ has eigenvalues $\set{c^2\lambda_m}_{m=1}^{M}$ and so the corresponding MSE tends to zero as $c$ becomes large.  However, this method for lessening MSE is unrealistic in real-world communications applications, since there the magnitudes $\set{\abs{\ip{x}{f_n}}}_{n=1}^{N}$ of the entries of the transmitted signal $F^*x$ are bounded in terms of the signal-to-noise ratio of the given channel.  As such, the problem of minimizing the MSE is usually considered in the context of frames $F$ in which the $n$th frame element $f_n$ is required to have a given prescribed length $\norm{f_n}^2=\mu_n$.

In particular, choosing $\mu_n=1$ for all $n=1,\dotsc,N$, one may consider the problem of minimizing the MSE \eqref{equation.definition of MSE} over all $M\times N$ matrices $F$ with unit norm columns.  This problem has been solved~\cite{GoyalVT:98}, the answer being that such an $F$ is necessarily a \textit{tight frame}, meaning its lower and upper frame bounds $A$ and $B$ can be taken to be equal, namely $FF^*=A\rmI$ for some $A>0$.  Such \textit{unit norm tight frames} (UNTFs) have the property that their $N\times N$ \textit{Gram matrices} $F^*F$ have ones along their diagonal, and so their $M\times M$ frame operators $FF^*=A\rmI$ satisfy:
\begin{equation*}
MA
=\Tr(A\rmI)
=\Tr(FF^*)
=\Tr(F^*F)
=N.
\end{equation*}
Thus, the tight frame constant $A$ of a UNTF is necessarily $A=\frac NM$.  As such, the MSE of a UNTF $F$ is:
\begin{equation}
\label{equation.MSE of UNTF}
\sigma^2\sum_{m=1}^M \frac{1}{\lambda_m}
=\sigma^2\sum_{m=1}^M \frac{M}{N}
=\frac{(M\sigma)^2}{N}.
\end{equation}
Note that due to the $\frac1N$ term in \eqref{equation.MSE of UNTF}, for any fixed $M$ and $\sigma$ this MSE will tend to zero as $N$ grows large.  That is, for a given signal $x$ and a fixed signal-to-noise-per-transmitted-coefficient parameter $\sigma$, one may expect to communicate $x$ with arbitrarily high levels of reliability even through a noisy channel, provided one is willing to first encode $x$ via an $N\times M$ matrix $F^*$ of a correspondingly high level of redundancy.  That is, this reliability is purchased at the cost of computing and transmitting $F^*x$, which is a longer signal than $x$ itself.

We are interested in generalizing these ideas to the realm of information fusion.  There, we assume that we have already been given $F_{N_0}^*x$ where $F_{N_0}=\set{f_n}_{n=1}^{N_0}$ is a fixed known $M\times N_0$ matrix.  That is, we assume that we are given a set of inner products $\set{\ip{x}{f_n}}_{n=1}^{N_0}$.  If $F_{N_0}$ is a frame, then $x$ can be reconstructed from these measurements.  However, this reconstruction may lack stability if $F_{N_0}$ is poorly conditioned; in such cases the  MSE of the canonical dual of $F_{N_0}$ is large.  We therefore seek to add measurement vectors to this frame---to \textit{complete} $F_{N_0}=\set{f_n}_{n=1}^{N_0}$ to a longer matrix $F_{N}=\set{f_n}_{n=1}^{N}$ where $N>N_0$---in a manner so that the MSE of the canonical dual of $F_N$ is minimal.  Here, we again formulate the problem realistically by prescribing the norms of these new measurements.  To be precise, we are interested in solving the following problem:

\begin{problem}
\label{problem.main problem}
Given vectors $F_{N_0}=\set{f_n}_{n=1}^{N_0}$ in $\bbR^M$, an integer $N>N_0$ and a sequence of desired norms $\set{\mu_n}_{n=N_0+1}^{N}$, find vectors $\set{f_n}_{n=N_0+1}^{N}$ in $\bbR^M$ that have the property that the mean square error $\Tr[(F_N^{}F_N^*)^{-1}]$ of the resulting completed frame $F_N=\set{f_n}_{n=1}^{N}$ is minimal.
\end{problem}

In the remainder of this paper, we discuss some of our recent progress towards solving this problem.

%%%%%%%%%%%%%%%%%%%%%%%%%%%%%%%%%%%%%%%%%%%%%%%%%%%%%%%%%%%%%%%%
% Section 2: Constructing frames with eigensteps
%%%%%%%%%%%%%%%%%%%%%%%%%%%%%%%%%%%%%%%%%%%%%%%%%%%%%%%%%%%%%%%%

\section{Constructing frames with eigensteps}

Solving Problem~\ref{problem.main problem} will require us to have a good understanding of how the spectrum of a frame operator can change as a result of the inclusion of new frame elements of known prescribed norms.  We have recently made progress~\cite{CahillFMPS:11,FickusMPS:11} in solving a less-difficult version of this problem: Given nonnegative nonincreasing sequences $\set{\lambda_m}_{m=1}^{M}$ and $\set{\mu_n}_{n=1}^{N}$, construct all frames $F=\set{f_n}_{n=1}^{N}$ with the property that $FF^*$ has spectrum $\set{\lambda_m}_{m=1}^{M}$ and that $\norm{f_n}^2=\mu_n$ for all $n$.  This work is based on the classical notion of eigenvalue interlacing.

To be precise, we say that a given nonnegative sequence $\set{\beta_m}_{m=1}^{n}$ \textit{interlaces} on another such sequence $\set{\alpha_m}_{m=1}^{n-1}$, denoted $\set{\alpha_m}_{m=1}^{n-1}\sqsubseteq\set{\beta_m}_{m=1}^{n}$, provided $\beta_{m+1}\leq\alpha_m\leq\beta_m$ for all $m=1,\dotsc,n-1$.  This notion of interlacing can be extended to sequences of identical length: we say that $\set{\alpha_m}_{m=1}^{M}\sqsubseteq\set{\beta_m}_{m=1}^{M}$ provided $\alpha_M\leq\beta_M$ and $\beta_{m+1}\leq\alpha_m\leq\beta_m$ for all $m=1,\dotsc,M-1$.  It is classically known that if $G$ is self-adjoint, then the spectrum of $G+ff^*$ interlaces on that of $G$; given a sequence of vectors $F=\set{f_n}_{n=1}^{N}$, we apply this fact to the frame operators of the partial sequences $F_n=\set{f_m}_{m=1}^{n}$:
\begin{equation*}
F_n^{}F_n^*=\sum_{m=1}^{n}f_m^{}f_m^*.
\end{equation*}
Letting $\set{\lambda_{n;m}}_{m=1}^{M}$ denote the spectrum of $F_n^{}F_n^{*}$, we have that $F_{n+1}^{}F_{n+1}^{*}=F_{n}^{}F_{n}^{*}+f_{n+1}^{}f_{n+1}^{*}$ and so $\set{\lambda_{n+1;m}}_{m=1}^{M}$ interlaces on $\set{\lambda_{n;m}}_{m=1}^{M}$.  Such a sequence of interlacing spectra is known~\cite{CahillFMPS:11,FickusMPS:11} as a sequence of \textit{(outer) eigensteps}:
\begin{definition}
\label{definition.outer eigensteps}
Let $\set{\lambda_m}_{m=1}^{M}$ and $\set{\mu_n}_{n=1}^{N}$ be nonnegative and nonincreasing.  A corresponding sequence of \textit{outer eigensteps} is a sequence of sequences $\set{\set{\lambda_{n;m}}_{m=1}^{M}}_{n=0}^{N}$ which satisfies the following four properties:
\begin{enumerate}
\renewcommand{\labelenumi}{(\roman{enumi})}
\item $\lambda_{0;m}=0$ for every $m=1,\ldots,M$,
\item $\lambda_{N;m}=\lambda_m$ for every $m=1,\ldots,M$,
\item $\set{\lambda_{n-1;m}}_{m=1}^{M}\sqsubseteq\set{\lambda_{n;m}}_{m=1}^{M}$ for every $n=1,\ldots,N$,
\item $\sum_{m=1}^{M}\lambda_{n;m}=\sum_{m=1}^{n}\mu_{m}$ for every $n=1,\ldots,N$.
\end{enumerate}
\end{definition}
The following result, namely Theorem~$2$ of Cahill \textit{et al}'s work~\cite{CahillFMPS:11}, characterizes the existence of a sequence of vectors whose frame operator possesses a given desired spectrum and whose elements have given desired lengths:
\begin{theorem}
\label{theorem.necessity and sufficiency of eigensteps}
For any nonnegative nonincreasing sequences $\set{\lambda_m}_{m=1}^{M}$ and $\set{\mu_n}_{n=1}^{N}$, every sequence of vectors $F=\set{f_n}_{n=1}^{N}$ in $\bbR^M$ whose frame operator $FF^*$ has spectrum $\set{\lambda_m}_{m=1}^{M}$ and which satisfies $\norm{f_n}^2=\mu_n$ for all $n$ can be constructed by the following process:
\begin{enumerate}
\renewcommand{\labelenumi}{\Alph{enumi}.}
\item
Pick outer eigensteps $\set{\set{\lambda_{n;m}}_{m=1}^{M}}_{n=0}^{N}$ as in Definition~\ref{definition.outer eigensteps}. 
\item
For each $n=1,\dotsc,N$, consider the polynomial:
\begin{equation*}
p_n(x):=\prod_{m=1}^{M}(x-\lambda_{n;m}).
\end{equation*}
Take any $f_1\in\bbR^M$ such that $\norm{f_1}^2=\mu_1$.

For each $n=1,\dotsc,N-1$, choose any $f_{n+1}$ such that:
\begin{equation}
\label{equation.necessity and sufficiency of eigensteps 2}
\norm{P_{n;\lambda}f_{n+1}}^2=-\lim_{x\rightarrow\lambda}(x-\lambda)\frac{p_{n+1}(x)}{p_n(x)}\qquad \forall \lambda\in\set{\lambda_{n;m}}_{m=1}^M.
\end{equation}
Here, $P_{n;\lambda}$ denotes the orthogonal projection operator onto the eigenspace $\rmN(\lambda\rmI-F_n^{}F_n^*)$ of the frame operator of $F_n:=\set{f_m}_{m=1}^{n}$.  The limit in~\eqref{equation.necessity and sufficiency of eigensteps 2} necessarily exists and is nonpositive.
\end{enumerate}
Conversely, any $F$ constructed by this process has $\set{\lambda_m}_{m=1}^{M}$ as the spectrum of $FF^*$ and $\norm{f_n}^2=\mu_n$ for all $n$, and moreover, $F_n^{}F_n^*$ has spectrum $\set{\lambda_{n;m}}_{m=1}^{M}$.
\end{theorem}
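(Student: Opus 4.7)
The plan rests on a single classical identity. For a self-adjoint operator $G$ with distinct eigenvalues $\set{\lambda}$ and corresponding orthogonal projections $P_\lambda$, and for any vector $f$, the matrix determinant lemma for a rank-one perturbation gives
\begin{equation*}
\det(x\rmI-G-ff^*)=\det(x\rmI-G)\Bigl(1-\sum_\lambda\frac{\norm{P_\lambda f}^2}{x-\lambda}\Bigr),
\end{equation*}
since $f^*(x\rmI-G)^{-1}f$ decomposes spectrally into $\sum_\lambda\norm{P_\lambda f}^2/(x-\lambda)$. Applied to $G=F_n^{}F_n^*$, this becomes
\begin{equation*}
\frac{p_{n+1}(x)}{p_n(x)}=1-\sum_\lambda\frac{\norm{P_{n;\lambda}f_{n+1}}^2}{x-\lambda},
\end{equation*}
and multiplying by $(x-\lambda_0)$ and letting $x\to\lambda_0$ extracts exactly the squared projection norm in~\eqref{equation.necessity and sufficiency of eigensteps 2}. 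This one identity powers both directions of the theorem.

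For necessity, I would take any $F$ with the prescribed spectrum and norms, define $\set{\lambda_{n;m}}_{m=1}^M$ to be the spectrum of $F_n^{}F_n^*$, and verify the four axioms of Definition~\ref{definition.outer eigensteps}: (i) is trivial from $F_0^{}F_0^*=0$; (ii) is the hypothesis; (iii) is Cauchy interlacing applied to each rank-one update $F_{n+1}^{}F_{n+1}^*=F_n^{}F_n^*+f_{n+1}^{}f_{n+1}^*$; and (iv) follows from $\sum_m\lambda_{n;m}=\Tr(F_n^{}F_n^*)=\sum_{m=1}^n\norm{f_m}^2=\sum_{m=1}^n\mu_m$. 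Equation~\eqref{equation.necessity and sufficiency of eigensteps 2} is then immediate from the polynomial identity above, and nonpositivity of the limit follows because its negative equals the honest squared norm $\norm{P_{n;\lambda}f_{n+1}}^2\geq0$.

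For sufficiency I would proceed by induction on $n$. The base case produces $F_1^{}F_1^*=f_1^{}f_1^*$ with spectrum $(\mu_1,0,\dotsc,0)$, which matches $\set{\lambda_{1;m}}_{m=1}^M$ as forced by axioms (iii) and (iv). For the inductive step, assume $F_n^{}F_n^*$ has spectrum $\set{\lambda_{n;m}}_{m=1}^M$; I must first show a vector $f_{n+1}$ satisfying~\eqref{equation.necessity and sufficiency of eigensteps 2} exists. The nonnegativity of each prescribed squared projection norm, i.e.\ the nonpositivity of the limit, reduces to a sign check on the residue of $p_{n+1}/p_n$ at each $\lambda_{n;m}$, which follows directly from the interlacing axiom (iii); and these squared norms sum to $\mu_{n+1}$ by comparing the $x^{M-1}$-coefficients of $p_n$ and $p_{n+1}$, which coincides with axiom (iv). Once $f_{n+1}$ has been chosen to match these projection norms, substituting them into the polynomial identity reconstructs $p_{n+1}$ as the characteristic polynomial of $F_{n+1}^{}F_{n+1}^*$, closing the induction.

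The main obstacle is the degenerate case in which some $\lambda_{n;m}$ appears in the spectrum of $F_{n+1}^{}F_{n+1}^*$ as well, so that $p_n$ and $p_{n+1}$ share a common factor at $\lambda$. In that case the corresponding term in the partial fractions sum is a removable singularity rather than a genuine pole, and the projection $P_{n;\lambda}f_{n+1}$ is forced to have a reduced squared norm that exactly absorbs the cancellation of multiplicities; one must carefully check that the interlacing axiom restricts such shared factors to the combinatorially admissible pattern, so that both the secular identity and the count of eigenvalues of $F_{n+1}^{}F_{n+1}^*$ still balance. The ``limit'' formulation in~\eqref{equation.necessity and sufficiency of eigensteps 2}---rather than a naive residue---is precisely what handles these multiplicity cancellations uniformly in both directions.
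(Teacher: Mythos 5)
The paper itself offers no proof of this theorem---it is quoted as Theorem~2 of the cited work\cite{CahillFMPS:11}---but your proposal is correct and follows essentially the same route used there: the matrix-determinant-lemma identity $p_{n+1}(x)=p_n(x)\bigl(1-\sum_\lambda\norm{P_{n;\lambda}f_{n+1}}^2/(x-\lambda)\bigr)$ drives both directions, with interlacing guaranteeing at most simple poles so that the limit formulation in~\eqref{equation.necessity and sufficiency of eigensteps 2} absorbs multiplicities. The only places needing to be written out in full are exactly the ones you flag: the sign of each residue via interlacing, the trace/coefficient computation giving $\sum_\lambda\norm{P_{n;\lambda}f_{n+1}}^2=\mu_{n+1}$, and the removable-singularity argument showing the prescribed projection norms force $\det(x\rmI-F_{n+1}^{}F_{n+1}^*)=p_{n+1}(x)$.
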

The method of Theorem~\ref{theorem.necessity and sufficiency of eigensteps} is the first known algorithm for producing all such frames.  However, there are two issues with this method with respect to implementation.  We discuss the first issue in Section $3$ and the second issue in Section $4$.

%%%%%%%%%%%%%%%%%%%%%%%%%%%%%%%%%%%%%%%%%%%%%%%%%%%%%%%%%%%%%%%%
% Section 3: Characterizing eigensteps
%%%%%%%%%%%%%%%%%%%%%%%%%%%%%%%%%%%%%%%%%%%%%%%%%%%%%%%%%%%%%%%%

\section{Characterizing eigensteps}

Our first issue with the algorithm of Theorem~\ref{theorem.necessity and sufficiency of eigensteps} is that Step~A is vague.  Indeed, for a given $\set{\lambda_m}_{m=1}^{M}$ and $\set{\mu_n}_{n=1}^{N}$, it is not clear whether or not a given sequence of outer eigensteps even exists, let alone how one should find them all.  Fortunately, these issues can be addressed~\cite{FickusMPS:11}.  The key idea is to transition from the eigenvalues $\set{\lambda_{n;m}}_{m=1}^{M}$ of the $M\times M$ frame operator $F_n^{}F_n^{*}$ to the eigenvalues $\set{\lambda_{n;m}}_{m=1}^{n}$ of the $n\times n$ Gram matrix $F_n^*F_n^{}$.  These two spectra are zero-padded versions of each other.  Under this transition, the notion of a sequence of outer eigensteps (Definition \ref{definition.outer eigensteps}) transforms into an alternative but equivalent notion of \textit{inner eigensteps}:
\begin{definition}
\label{definition.inner eigensteps}
Let $\set{\lambda_n}_{n=1}^{N}$ and $\set{\mu_n}_{n=1}^{N}$ be nonnegative nonincreasing sequences.  A corresponding sequence of \textit{inner eigensteps} is a sequence of sequences $\set{\set{\lambda_{n;m}}_{m=1}^{n}}_{n=1}^{N}$ which satisfies the following three properties:
\begin{enumerate}
\renewcommand{\labelenumi}{(\roman{enumi})}
\item $\lambda_{N;m}=\lambda_m$ for every $m=1,\ldots,N$,
\item $\set{\lambda_{n-1;m}}_{m=1}^{n-1}\sqsubseteq\set{\lambda_{n;m}}_{m=1}^{n}$ for every $n=2,\dotsc,N$,
\item $\sum_{m=1}^{n}\lambda_{n;m}=\sum_{m=1}^{n}\mu_{m}$ for every $n=1,\dotsc,N$.
\end{enumerate}
\end{definition}
Note the terms ``outer" and ``inner" follow from the matrices that they are derived from: outer eigensteps arise as spectra of frame operators, which are sums of outer products; inner eigensteps arise from Gram matrices, which are tables of inner products.  In the next result, we formally verify that one may naturally identify a sequence of outer eigensteps with a sequence of inner eigensteps, and vice versa, provided one zero-pads appropriately:
\begin{theorem}
\label{theorem.inner vs outer}
Given nonnegative nonincreasing sequences $\{\lambda_n\}_{n=1}^N$ and $\{\mu_n\}_{n=1}^N$, where $\lambda_n=0$ for every $n>M$, every choice of inner eigensteps corresponds to a unique choice of outer eigensteps and vice versa, the two being zero-padded versions of each other.
Specifically, inner eigensteps $\{\{\lambda_{n;m}\}_{m=1}^n\}_{n=1}^N$ correspond to outer eigensteps $\{\{\lambda_{n;m}\}_{m=1}^M\}_{n=0}^N$, where $\lambda_{n;m}=0$ whenever $n=0$ or $m>n$.
Conversely, outer eigensteps $\{\{\lambda_{n;m}\}_{m=1}^M\}_{n=0}^N$ correspond to inner eigensteps $\{\{\lambda_{n;m}\}_{m=1}^n\}_{n=1}^N$, where $\lambda_{n;m}=0$ whenever $m>M$.

Moreover, for every $n=1,\ldots,N$, $\{\lambda_{n;m}\}_{m=1}^M$ is the spectrum of the frame operator $F_n^{}F_n^*$ of $F_n=\{f_m\}_{m=1}^n$ if and only if $\{\lambda_{n;m}\}_{m=1}^n$ is the spectrum of the Gram matrix $F_n^*F_n^{}$.
\end{theorem}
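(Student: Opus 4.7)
The plan is to handle the two assertions separately: first the combinatorial bijection between inner and outer eigensteps via zero-padding, then the spectral equivalence between $F_n^{}F_n^*$ and $F_n^*F_n^{}$.

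The central technical step, and what I expect to be the main obstacle, is establishing two automatic padding properties. For outer eigensteps, a forward induction on $n$, starting from $\lambda_{0;m}=0$ and applying the interlacing inequality $\lambda_{n+1;m+1} \le \lambda_{n;m}$ together with nonnegativity, forces $\lambda_{n;m}=0$ whenever $m>n$. For inner eigensteps, a backward induction on $n$, starting from the hypothesis that $\lambda_{N;m} = \lambda_m = 0$ for $m>M$ and again invoking interlacing and nonnegativity, forces $\lambda_{n;m}=0$ whenever $m>M$. Neither induction is deep, but they are the hinge on which the bijection turns; once they are in hand, the rest is bookkeeping.

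With these padding facts in place, I define the correspondence explicitly: extend a sequence of inner eigensteps to a sequence of outer eigensteps by prepending $\lambda_{0;m}:=0$ and setting $\lambda_{n;m}:=0$ for $m>n$, and restrict a sequence of outer eigensteps to a sequence of inner eigensteps by discarding the $n=0$ step and the entries with $m>n$. The two padding facts make these maps mutual inverses. I then check that each axiom translates correctly between Definition~\ref{definition.outer eigensteps} and Definition~\ref{definition.inner eigensteps}: condition (i) of outer is trivially built in; condition (ii) of outer matches condition (i) of inner together with the assumption $\lambda_m=0$ for $m>M$; condition (iv) of outer matches condition (iii) of inner, because any summands introduced or removed in passing between the two range conventions are zero by the padding facts; and the interlacing in condition (iii) is inherited in the non-padded region while reducing to $0 \le 0$ or $0 \le \lambda_{n;m}$ in the padded region.

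For the ``moreover'' statement, I appeal to the singular value decomposition $F_n = U\Sigma V^*$. This yields $F_n^{}F_n^* = U\Sigma\Sigma^* U^*$ and $F_n^*F_n^{} = V\Sigma^*\Sigma V^*$; since $\Sigma\Sigma^*$ and $\Sigma^*\Sigma$ are diagonal with the same nonzero entries, namely the squared singular values of $F_n$, the $M\times M$ frame operator and the $n\times n$ Gram matrix have identical nonzero spectra and differ only in the number of zero eigenvalues required to pad out to their respective dimensions. This is precisely the zero-padding relationship established in the first part, so the spectral equivalence follows immediately.
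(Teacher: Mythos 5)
Your proposal is correct and follows essentially the same route as the paper: the bijection is the zero-padding correspondence, with the hinge being exactly the two automatic vanishing facts ($\lambda_{n;m}=0$ for $m>n$ in the outer case, and $\lambda_{n;m}=0$ for $m>M$ in the inner case) derived from interlacing plus nonnegativity, followed by the same axiom-by-axiom bookkeeping. The only difference is cosmetic: you phrase the padding facts as forward/backward inductions rather than chained inequalities, and you supply an explicit SVD argument for the ``moreover'' clause, which the paper states but does not prove.
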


\begin{proof}
First, take outer eigensteps $\{\{\lambda_{n;m}\}_{m=1}^M\}_{n=0}^N$, and consider $\{\{\lambda_{n;m}\}_{m=1}^n\}_{n=1}^N$, where we define
\begin{equation}
\label{eq.out2in 1}
\lambda_{n;m}=0\qquad\mbox{whenever } m>M.
\end{equation}
Then Definition~\ref{definition.inner eigensteps}.i follows from Definition~\ref{definition.outer eigensteps}.ii when $m\leq M$, and from \eqref{eq.out2in 1} and the assumption that $\lambda_n=0$ for every $n>M$ when $m>M$.
Next, we note that Definition~\ref{definition.outer eigensteps}.iii gives
\begin{align}
\label{eq.out2in 2}
\lambda_{n;m+1}
&\leq\lambda_{n-1;m}\leq\lambda_{n;m}\qquad\forall m=1,\ldots,M-1,\\
\label{eq.out2in 3}
0
&\leq\lambda_{n-1,M}\leq\lambda_{n;M},
\end{align}
for every $n=1,\ldots,N$.
To prove Definition~\ref{definition.inner eigensteps}.ii, pick any $n=2,\ldots,N$.
We need to show $\lambda_{n;m+1}\leq\lambda_{n-1;m}\leq\lambda_{n;m}$ for every $m=1,\ldots,n-1$.
This follows directly from \eqref{eq.out2in 2} when $n\leq M$ or when $n>M$ and $m<M$.
If $n>M$ and $m=M$, then \eqref{eq.out2in 1} and \eqref{eq.out2in 3} together give
\begin{equation*}
\lambda_{n;M+1}=0\leq\lambda_{n-1,M}\leq\lambda_{n;M}.
\end{equation*}
Also, \eqref{eq.out2in 1} gives that $\lambda_{n;m+1}\leq\lambda_{n-1;m}\leq\lambda_{n;m}$ becomes $0\leq0\leq0$ whenever $n>M$ and $m>M$.
For Definition~\ref{definition.inner eigensteps}.iii, note that when $n\geq M$, \eqref{eq.out2in 1} and Definition~\ref{definition.outer eigensteps}.iv together give
\begin{equation}
\label{eq.out2in 5}
\sum_{m=1}^n\lambda_{n;m}=\sum_{m=1}^M\lambda_{n;m}=\sum_{m=1}^n\mu_m.
\end{equation}
Furthermore, if $n<M$, then Definition~\ref{definition.outer eigensteps}.i and Definition~\ref{definition.outer eigensteps}.iii together give
\begin{equation}
\label{eq.out2in 4}
\lambda_{n;m}=0\qquad\mbox{whenever } m>n,
\end{equation}
and so \eqref{eq.out2in 4} and Definition~\ref{definition.outer eigensteps}.iv together give \eqref{eq.out2in 5}.

Now take inner eigensteps $\{\{\lambda_{n;m}\}_{m=1}^n\}_{n=1}^N$, and consider $\{\{\lambda_{n;m}\}_{m=1}^M\}_{n=0}^N$, where we define
\begin{equation}
\label{eq.in2out 1}
\lambda_{n;m}=0\qquad\mbox{whenever } m>n.
\end{equation}
Then Definition~\ref{definition.outer eigensteps}.i follows directly from \eqref{eq.in2out 1} by taking $n=0$.
Also, Definition~\ref{definition.outer eigensteps}.ii follows from Definition~\ref{definition.inner eigensteps}.i since $M\leq N$.
Next, Definition~\ref{definition.inner eigensteps}.ii gives 
\begin{equation}
\label{eq.in2out 2}
\lambda_{n;m+1}\leq\lambda_{n-1;m}\leq\lambda_{n;m}\qquad\forall n=2,\ldots,N,~m=1,\ldots,n-1.
\end{equation}
Using the nonnegativity of $\{\lambda_n\}_{n=1}^N$ along with Definition~\ref{definition.inner eigensteps}.i and an iterative application of the left-hand inequality of \eqref{eq.in2out 2} then gives
\begin{equation*}
0\leq\lambda_N=\lambda_{N;N}\leq\cdots\leq\lambda_{n;n}\qquad\forall n=1,\ldots,N.
\end{equation*}
Combining this with an iterative application of the right-hand inequality of \eqref{eq.in2out 2} then gives
\begin{equation}
\label{eq.in2out 3}
0\leq\lambda_{m;m}\leq\cdots\leq\lambda_{n;m}\qquad\forall n\geq m.
\end{equation}
For Definition~\ref{definition.outer eigensteps}.iii, we need to show \eqref{eq.out2in 2} and \eqref{eq.out2in 3} for every $n=1,\ldots,N$.
Considering \eqref{eq.in2out 1}, when $n=1$, \eqref{eq.out2in 2} and \eqref{eq.out2in 3} together become $\lambda_{1;1}\geq0$, which follows from \eqref{eq.in2out 3}.
Also, when $n>M$, \eqref{eq.in2out 2} immediately gives \eqref{eq.out2in 2}, while \eqref{eq.out2in 3} follows from both \eqref{eq.in2out 3} and \eqref{eq.in2out 2}:
\begin{equation*}
%\label{eq.in2out 4}
0\leq\lambda_{n;M+1}\leq\lambda_{n-1;M}\leq\lambda_{n;M}.
\end{equation*}
For the case $2\leq n\leq M$, note that \eqref{eq.in2out 2} gives the inequalities in \eqref{eq.out2in 2} whenever $m\leq n-1$.
Furthermore when $m=n$, \eqref{eq.in2out 1} gives $\lambda_{n;n+1}=\lambda_{n-1;n}=0$, and so the inequalities in \eqref{eq.out2in 2} become $\lambda_{n;n}\geq0$, which follows from \eqref{eq.in2out 3}.
Otherwise when $m>n$, the inequalities in \eqref{eq.out2in 2} become $0\leq0\leq0$ by \eqref{eq.in2out 1}.
To finish the case $2\leq n\leq M$, we need to prove \eqref{eq.out2in 3}.
When $n=M$, \eqref{eq.out2in 3} becomes $\lambda_{n;n}\geq 0$, which follows from \eqref{eq.in2out 3}.
Otherwise when $n<M$, \eqref{eq.out2in 3} becomes $0\leq0\leq0$ by \eqref{eq.in2out 1}.
We now have only to prove Definition~\ref{definition.outer eigensteps}.iv.
For $n\leq M$, \eqref{eq.in2out 1} and Definition~\ref{definition.inner eigensteps}.iii together imply
\begin{equation}
\label{eq.in2out 5}
\sum_{m=1}^M\lambda_{n;m}=\sum_{m=1}^n\lambda_{n;m}=\sum_{m=1}^n\mu_m.
\end{equation}
Next, note that \eqref{eq.in2out 3}, Definition~\ref{definition.inner eigensteps}.i, and our assumption that $\lambda_n=0$ for every $n>M$ gives
\begin{equation*}
0\leq\lambda_{n;m}\leq\lambda_{N;m}=\lambda_m=0\qquad\mbox{whenever } n\geq m>M.
\end{equation*}
Thus, $\lambda_{n;m}=0$ whenever $n\geq m> M$; when $n>M$, we can combine this with Definition~\ref{definition.inner eigensteps}.iii to get \eqref{eq.in2out 5}.
\end{proof}
This result, when coupled with a complete constructive characterization of all valid outer eigensteps---itself a nontrivial result---provides a systematic method for constructing any and all valid inner eigensteps~\cite{FickusMPS:11}, thereby making Step~A of Theorem~\ref{theorem.necessity and sufficiency of eigensteps} explicit.

%%%%%%%%%%%%%%%%%%%%%%%%%%%%%%%%%%%%%%%%%%%%%%%%%%%%%%%%%%%%%%%%
% Section 4: Constructing frames from eigensteps
%%%%%%%%%%%%%%%%%%%%%%%%%%%%%%%%%%%%%%%%%%%%%%%%%%%%%%%%%%%%%%%%

\section{Constructing frames from eigensteps}

Our second issue with the algorithm of Theorem~\ref{theorem.necessity and sufficiency of eigensteps} is that Step~B apparently requires a large amount of tedious linear algebra.  To be precise, in order to implement Step~B, one appears forced to compute the eigenvectors of $F_n^{}F_n^*$ for each $n=1,\dotsc,N-1$.  Though this is not difficult---by construction, the eigenvalues of $F_n^{}F_n^{*}$ are known to be $\set{\lambda_{n;m}}_{m=1}^{M}$---this does not lend itself to elegant closed-form expressions for the frame vectors themselves.  Fortunately, this process can be made surprisingly explicit: see Theorem~$7$ of Cahill \textit{et al}'s work~\cite{CahillFMPS:11}.  We do not present these details here.  However, for the sake of the interested reader, we do provide the following MATLAB code that implements this improved version of Step~B.  

Here, for the sake of simplicity, $V_n=\rmI$ for all $n$.  The following two functions must be placed in the same directory in order to execute the code.   The first function \verb|constructU.m| implements Step~B.  Here the output of the function is a slightly modified version of Steps~B.4 and B.5.  The function \verb|constructU.m| returns $U_n^*f_{n+1}^{}$ and $U_n^*U_{n+1}^{}$. We recursively call \verb|constructU.m| and then multiply the ouputs at each iteration in order to calculate $f_{n}$ and $U_{n}$ for $n=2,\dots,N$.  This step is accomplished by the function \verb|constructFrame.m| which outputs the final sequence of vectors $F=\set{f_n}_{n=1}^N$ whose frame operator $FF^*$ has spectrum $\set{\lambda_m}_{m=1}^M$ and which satisfies $\|f_n\|^2 = \mu_n$ for all $n$.

\begin{verbatim}
function [U, Uf] = constructU(E1, E2)
% Description:  This function implements Steps B.1-5 of the algorithm to 
%               explicitly construct any and all sequences of vectors whose
%               partial-frame operator spectra match the eigensteps chosen
%               in Step A. Here, we assume V1,...,Vn are the identity. 
% Call:     	[U, Uf] = constructU(E1, E2) 
%                   E1 = Spectrum at (n)
%                   E2 = Spectrum at (n+1)
% Output:       U = U_(n)* U_(n+1)
%               Uf = U_(n)* f_(n+1)
% File:         constructU.m

%spectra must be row vectors and listed in descending order*****************
if ~isrow(E1), E1=E1'; end
if ~isrow(E2), E2=E2'; end
E1 = sort(E1, 'descend');
E2 = sort(E2, 'descend');

M = length(E1);
        
%Find indices of unique elements (Step B.2)********************************
R1 = E1; %Unique set of eigenvalues of E1
R2 = E2; %Unique set of eigenvalues of E2
for i = 1:M
    [tf, loc] = ismember(E1(i), R2);
    if tf == 1
        [tf,loclast]=ismember(E1(i), R1);
        R1(loclast) = -1;
        R2(loc)=-1;
    end
end

%Index sets of unique elements of E1 and E2, respectively.
I = find(R1 >=0);
J = find(R2 >=0);

M1 = length(I);
M2 = M-M1;

R1 = R1(I);
R2 = R2(J);

%Construct column and row vectors (Step B.3)*******************************
for i = 1:M1
    P(i) = sqrt(-prod(R1(i)-R2)/prod(R1(i)-R1(find(R1~=R1(i)))));
    Q(i) = sqrt(prod(R2(i)-R1)/prod(R2(i)-R2(find(R2~=R2(i)))));
end

%Construct difference matrix
for i = 1:M1
    D(i,:) = 1./(R2-R1(i));
end
W =(P'*Q).*D;

%Create Block Diagonal Matrix
UU=blkdiag(W,eye(M2));

%Permute Row and Columns;
PRow = permMat(I,M);
PCol = permMat(J,M);

%Compute U and Uf (Modified Steps B.4 and B.5)*****************************
U = PRow*UU*inv(PCol);
Uf=zeros(M,1);
Uf(I) = P;
%**************************************************************************
%Define permutation matrix given the unique index set I 
function P = permMat(I,M)
m = 1:M;
pi = [I setdiff(m,I)];
Id = eye(M);
P = Id(1:M,pi);

%**************************************************************************
function tf = isrow(E)
[rows,cols]=size(E);
if rows==1
    tf=1;
else
    tf=0;
end
\end{verbatim}

\begin{verbatim}
function F = constructFrame(E,U1)
% Description:  This function implements Step B of the algorithm to 
%               explicitly construct any and all sequences of vectors whose
%               partial-frame operator spectra match the eigensteps chosen
%               in Step A. Here, we assume V1,...,Vn are the identity. 
% Call:     	F = constructFrame(E, U1) 
%                   E = Matrix of eigensteps 
%                   U1 = Initial unitary matrix
% Output:       The frame, F.
% File:         constructFrame.m

UU(:,:,1) = U1;
U(:,:,1) = UU(:,:,1);
F(:,1) = U(:,1,1);
[M,N] = size(E);

for i = 2:N
    [UU(:,:,i), Uf(:,i)] = constructU(E(:,i-1),E(:,i));
    U(:,:,i) = eye(M);

    %Multiply matrices to find new U
    for j = 1:i, U(:,:,i) = U(:,:,i)*UU(:,:,j);end
    
    %Multiply to find new f
    Temp = eye(M);
    for j = 1:i-1; Temp = Temp*UU(:,:,j); end
    F(:,i) = Temp*Uf(:,i);
end
\end{verbatim}

The following example reproduces the results of Example~$8$ of Cahill \textit{et al}'s work~\cite{CahillFMPS:11}.

\begin{verbatim}
>> E = [0 0 0 2/3 5/3;0 1/3 4/3 5/3 5/3;1 5/3 5/3 5/3 5/3]

E =

         0         0         0    0.6667    1.6667
         0    0.3333    1.3333    1.6667    1.6667
    1.0000    1.6667    1.6667    1.6667    1.6667

>> U1=eye(3)

U1 =

     1     0     0
     0     1     0
     0     0     1

>> F = constructFrame(E,U1)

F =

    1.0000    0.6667   -0.4082   -0.1667    0.1667
         0    0.7454    0.9129    0.3727   -0.3727
         0         0         0    0.9129    0.9129
\end{verbatim}

\section*{Acknowledgments}
This work was supported by NSF DMS 1042701, NSF CCF 1017278, AFOSR F1ATA01103J001, AFOSR F1ATA00183G003 and the A.~B.~Krongard Fellowship.  The views expressed in this article are those of the authors and do not reflect the official policy or position of the United States Air Force, Department of Defense, or the U.S.~Government.


\begin{thebibliography}{WW}

\bibitem{CahillFMPS:11}
J.~Cahill, M.~Fickus, D.~G.~Mixon, M.~J.~Poteet and N.~Strawn,
Constructing finite frames of a given spectrum and set of lengths,
submitted, arXiv:1106.0921.

\bibitem{FickusMPS:11}
M.~Fickus, D.~G.~Mixon, M.~J.~Poteet, N.~Strawn,
Constructing all self-adjoint matrices with prescribed spectrum and diagonal,
in preparation.

\bibitem{GoyalVT:98}
V.~K.~Goyal, M.~Vetterli, N.~T.~Thao,
Quantized overcomplete expansions in ${\mathbb R}^N$: Analysis, synthesis, and algorithms,
IEEE~Trans.~Inform.~Theory 44 (1998) 16--31.

\bibitem{KovacevicC:07a}
J.~Kova\v{c}evi\'{c}, A.~Chebira,
Life beyond bases: The advent of frames (Part I),
IEEE~Signal~Process.~Mag.~24 (2007) 86--104.

\bibitem{KovacevicC:07b}
J.~Kova\v{c}evi\'{c}, A.~Chebira,
Life beyond bases: The advent of frames (Part II),
IEEE~Signal~Process.~Mag.~24 (2007) 115--125.

\end{thebibliography}
\end{document}